\numberwithin{equation}{section} 
\numberwithin{figure}{section} 
\theoremstyle{plain}
\newtheorem*{thm*}{Theorem}
\theoremstyle{plain}
\newtheorem{thm}{Theorem}[section]
\theoremstyle{definition}
\newtheorem{defn}[thm]{Definition}
\theoremstyle{plain}
\newtheorem{lem}[thm]{Lemma}
\theoremstyle{plain}
\newtheorem{prop}[thm]{Proposition}
\theoremstyle{plain}
\newtheorem{cor}[thm]{Corollary}
\theoremstyle{remark}
\theoremstyle{remark}
\newtheorem*{acknowledgement*}{Acknowledgement}
\begin{document}

\title[A Finslerian version of Beltrami
Theorem]{A characterisation for Finsler metrics of constant curvature
  and a Finslerian version of Beltrami Theorem}

\author[Bucataru]{Ioan Bucataru}
\address{Faculty of  Mathematics \\ Alexandru Ioan Cuza University \\ Ia\c si, 
  Romania}
\email{bucataru@uaic.ro}
\urladdr{http://www.math.uaic.ro/\textasciitilde{}bucataru/}
\author[Cre\c tu]{Georgeta Cre\c tu}
\address{Faculty of  Mathematics \\ Alexandru Ioan Cuza University \\ Ia\c si, 
  Romania}
\email{Georgeta.Cretu@math.uaic.ro}

\date{\today}

\begin{abstract}
We define a Weyl-type curvature tensor that provides a characterisation for Finsler
metrics of constant flag curvature. When the Finsler metric reduces to a Riemannian metric, the
Weyl-type curvature tensor reduces to the classical projective Weyl tensor. In the general case, the Weyl-
type curvature tensor differs from the Weyl projective curvature, it is not a projective invariant,
and hence Beltrami Theorem does not work in Finsler geometry. We provide the relation between
the Weyl-type curvature tensors of two projectively related Finsler metrics. Using this formula
we show that a projective deformation preserves the property of having
constant flag curvature if and only if the projective factor is a Hamel
function. This way we provide a Finslerian version of Beltrami Theorem.
\end{abstract}

\subjclass[2000]{53C60, 53B40, 58E30, 49N45}

\keywords{Finsler metrics, constant flag curvature, Weyl-type curvature, Beltrami Theorem}

\maketitle

\section{Introduction}

For a Riemannian manifold of dimension greater than or equal to $3$, the projective Weyl curvature tensor vanishes
if and only if the Riemannian metric has constant sectional
curvature. Since the projective Weyl tensor is a projective invariant,
it follows that projective deformations preserve the property of
having constant sectional curvature and hence Beltrami Theorem is
true in Riemannian geometry. 

In Finsler geometry this theorem is no longer valid due to the nonlinearity of the projective factor. There
are examples of two projectively related Finsler metrics, which do not
have, both of them, constant curvature, \cite[3.9]{BCS00}, \cite{Shen02}.
In this paper we prove the following Finslerian version of Beltrami
Theorem (Theorem \ref{thm:FBT}):

\emph{Consider two projectively related Finsler
metrics and assume that one of them has constant
curvature. Then, the other Finsler metric has constant curvature if and
only if the projective factor is a Hamel function.}

The key ingredient in our work is a Weyl-type curvature tensor that provides in Theorem
\ref{thm:cfc}, in dimension greater than or equal to $3$, a characterisation for Finsler metrics of constant flag curvature.  We show that in Riemannian geometry the
projective factor is always a Hamel function and hence we provide a
Finslerian proof of classical Beltrami Theorem.  

For a Finsler manifold, of dimension greater than or equal to $3$, we define in \eqref{w0} a Weyl-type
curvature tensor that characterises Finsler metrics
of constant flag curvature. The Weyl-type curvature tensor is inspired by condition
$D_1$ of \cite[Theorem 4.1]{BM13} for characterising sprays
metrisable by Finsler metric of constant curvature.  This tensor
has been used also by Li and Shen \cite[(1.3)]{LS18} to characterise
sprays of isotropic curvature. The Weyl-type curvature tensor extends
the Riemannian projective Weyl curvature tensor and it reduces to the
Finslerian projective Weyl curvature tensor only in the case of constant flag
curvature. In the Finslerian context, the class of projective
deformations is larger than in the Riemannian case and we prove, in
Lemma \ref{lem:pw0}, that the Weyl-type curvature tensor is preserved
only by those projective deformations that satisfy the Hamel equation. There are some other proposals for Weyl-type curvature
tensors that characterise Finsler metrics of constant curvature,
\cite{AZ06, NT10, SM86}. For an extensive survey on the projective
geometry in the Riemannian case we refer to \cite{M18}.

\section{A geometric setting for sprays}

In this work $M$ represents a differentiable manifold of dimension greater than or equal to $3$. We denote by $TM$ the tangent bundle of $M$, while
$T_0M=TM\setminus 0$ denotes the tangent bundle with the zero section
removed. Local coordinates on the base manifold $M$ will be denoted by
$(x^i)$, while induced local coordinates on $TM$ or $T_0M$ will be
denoted by $(x^i, y^i)$. On $TM$ there is a canonical vector field,
the \emph{Liouville vector field}, and a canonical tensor field, the
\emph{tangent structure}, given by 
\begin{eqnarray*} {\mathcal C}=y^i\frac{\partial}{\partial y^i}, \quad
  J=\frac{\partial}{\partial y^i}\otimes  dx^i. \end{eqnarray*}

A vector field $S\in {\mathcal
    X}(T_0M)$ is called a \emph{spray} if it is a second order vector
  field, which means that $JS={\mathcal C}$, and positively
  $2$-homogeneous in the fiber coordinates, which
  means $[{\mathcal C}, S]=S$.
 
Locally, a spray $S \in {\mathcal X}(T_0M)$ can be represented as 
\begin{eqnarray}
S=y^i\frac{\partial}{\partial x^i} - 2G^i\frac{\partial}{\partial
  y^i}, \label{spray}
\end{eqnarray}
where the locally defined functions $G^i$ are positively
$2$-homogeneous with respect to the fiber coordinates. A regular curve
$c: t\in I \to (x^i(t))\in M$ is called a \emph{geodesic of the spray
  $S$}, given by formula \eqref{spray}, if $S\circ c'=c''$, which means that it satisfies the system of
second order ordinary differential equations:
\begin{eqnarray}
\frac{d^2x^i}{dt^2}+ 2G^i\left(x, \frac{dx}{dt}\right)=0. \label{sode}
\end{eqnarray}

A spray $S$, given by formula \eqref{spray}, is \textit{affine} if the
functions $G^i$ are quadratic in the fiber coordinates. In other
words, $G^i(x,y)=\gamma^i_{jk}(x)y^jy^k$, where $\gamma^i_{jk}$ are
the coefficients of an affine connection on the base manifold. For an
affine spray, its geodesics, solutions of the system \eqref{sode}, are
geodesics of the corresponding affine connection.

In this work, we use the Fr\"olicher-Nijenhuis theory of derivations
associated to various vector-valued forms defined on $TM$ or
$T_0M$, \cite[Chapter 2]{GM00}, to define a geometric setting for
studying sprays and Finsler spaces.

For a given spray $S$ we consider the induced geometric setting
consisting of: the horizontal projector $h$, the Jacobi endomorphism
$\Phi$ and the curvature tensor $R$ of the nonlinear connection, 
given by
\begin{eqnarray}
h=\frac{1}{2}\left(\operatorname{Id} - [S,J]\right), \quad
  \Phi = \left(\operatorname{Id} - h\right) \circ [S,h], \quad R=\frac{1}{2}[h,h]. 
\label{hphi}
\end{eqnarray} 
Locally, these geometric structures are given by 
\begin{eqnarray*}
h=\frac{\delta}{\delta x^i}\otimes dx^i, & \displaystyle \frac{\delta}{\delta
  x^i}= \frac{\partial}{\partial x^i} - \frac{\partial G^j}{\partial
  y^i} \frac{\partial}{\partial y^j}. \\
\Phi = R^i_j\frac{\partial}{\partial y^i}\otimes dx^j, & \displaystyle 
  R^i_j=2\frac{\partial G^i}{\partial x^j} -
  S\left(\frac{\partial G^i}{\partial y^j}\right) - \frac{\partial
  G^i}{\partial y^k}\frac{\partial G^k}{\partial y^j}. \\
R= R^i_{jk} \frac{\partial}{\partial y^i}\otimes dx^j \wedge dx^k, & \displaystyle 
  R^i_{jk} = \frac{\delta}{\delta x^k}\left( \frac{\partial G^i}{\partial
  y^j} \right) - \frac{\delta}{\delta x^j}\left( \frac{\partial G^i}{\partial
  y^k} \right).  
\end{eqnarray*}
The two curvature tensors, $\Phi$ and $R$, are related by the following formulae
\begin{eqnarray*} 
\Phi = i_SR, \quad 3R=[J, \Phi]. 
\end{eqnarray*}
For an affine spray $S$, the components of the Jacobi endomorphism are
quadratic in the fiber coordinates, $R^i_j(x,y)=R^i_{kjl}(x)y^ky^l$,
where $R^i_{kjl}$ are the curvature components of the affine
connection.

For a spray $S$ and a function $L$ on $T_0M$, consider the following
semi-basic $1$-form, called the \emph{Euler-Lagrange} $1$-form, 
\begin{eqnarray}
\label{elform} \delta_SL & =& {\mathcal L_S}d_JL - dL= d_J  {\mathcal
  L_S} L - 2d_hL  \\ & = &
\left\{S\left(\frac{\partial L}{\partial y^i}\right) - \frac{\partial
    L}{\partial x^i}\right\}dx^i = \left\{\frac{\partial
    S(L)}{\partial y^i} - 2\frac{\delta L}{\delta x^i}\right\}dx^i. \nonumber \end{eqnarray}
The function $L$ is called a Lagrangian and a spray $S$ that
satisfies the Euler-Lagrange equation $\delta_SL=0$ is called a geodesic spray.

\section{Finsler metrics and projective metrizability}

In Finsler geometry, the Lagrangian $L$ from the Euler-Lagrange
equation $\delta_SL=0$ is either a Finsler metric $F$ or the
kinetic energy $F^2/2$ of a Finsler metric.

\begin{defn} \label{def:Finsler}
A \emph{Finsler metric} is a continuous and positive function $F:TM\to {\mathbb R}$
that satisfies:
\begin{itemize}
\item[i)] $F$ is smooth on $T_0M$;
\item[ii)] $F$ is positively $1$-homogeneous in the fiber
  coordinates;
\item[iii)] the Hessian of the energy function 
\begin{eqnarray}
g_{ij}=\frac{1}{2}\frac{\partial^2 F^2}{\partial y^i \partial y^j} 
\label{gij}
\end{eqnarray}
is positive definite.
\end{itemize}
\end{defn}
The homogeneity of the Finsler metric and the Euler Theorem on
homogeneous functions implies that
$F^2(x,y)=g_{ij}(x,y)y^iy^j$ and $\frac{\partial F^2}{\partial
  y^i}=2g_{ij}y^j.$  When the metric tensor \eqref{gij} does not depend on the fiber coordinates, we say that the
Finsler metric reduces to a Riemannian metric.

For a Finsler metric $F$, the regularity condition iii) in the Definition
\ref{def:Finsler} assures that there is a unique spray $S$ satisfying the
Euler-Lagrange equation $\delta_SF^2=0$, which in view of formulae
\eqref{elform} is equivalent to $d_hF^2=0$.  We call $S$ the
\emph{geodesic spray} of the Finsler metric. The geodesics of the
Finsler metric are the geodesics of the corresponding spray $S$. 

A Finsler metric $F$ has \emph{scalar flag curvature} $\kappa \in
C^{\infty}(T_0M)$ if the Jacobi endomorphism, of its geodesic spray,
or the curvature of the nonlinear connection, are given by 
\begin{eqnarray}
\Phi=\kappa\left(F^2J-Fd_JF\otimes {\mathcal C}\right)
  \Longleftrightarrow R=\frac{1}{3F} d_J(\kappa F^3)\wedge J -
  \frac{F}{3} d_J\kappa \wedge d_JF \otimes {\mathcal C}. 
\label{sfc} 
\end{eqnarray}
Locally, Finsler metrics of scalar flag curvature $\kappa$ are characterised by
the following form of the Jacobi endomorphism
\begin{eqnarray*}
R^i_j = \kappa \left(F^2 \delta^i_j -
  \frac{1}{2}\frac{\partial F^2}{\partial y^j} y^i\right) =\kappa \left(g_{sl} \delta^i_j -
  g_{sj} \delta^i_l\right) y^s y^l. 
\end{eqnarray*}
When the scalar flag curvature $\kappa$ is a constant, we say that $F$
has constant flag curvature. Finsler metrics of constant curvature
$\kappa$ are characterised by the following form of the
curvature tensor
\begin{eqnarray}
R = \kappa F d_JF \wedge J. \label{cfc} 
\end{eqnarray}
Locally, for such Finsler metrics, the curvature tensor takes the form
\begin{eqnarray*}
R^i_{jl} = \kappa \left(g_{sl} \delta^i_j -
  g_{sj} \delta^i_l\right) y^s. 
\end{eqnarray*}
 
If $S$ is the geodesic spray of a Riemannian metric then the condition
\eqref{sfc} reduces to the isotropy condition $R^i_{sjl}=\kappa \left(g_{sl} \delta^i_j - g_{sj} \delta^i_l \right)$.

Two sprays (or Finsler metrics) are \emph{projectively related} if
their geodesics coincide as oriented non-parametrised curves. Two
sprays $\widetilde{S}$ and $S$ are projectively related if and only if
there exists a positively $1$-homogeneous function $P\in
C^{\infty}(T_0M)$ such that 
\begin{eqnarray*}
\widetilde{S}=S-2P{\mathcal C}. \end{eqnarray*}
A Finsler metric $\widetilde{F}$ is projectively related to a spray
$S$ if and only if it satisfies the \emph{Hamel equation}
$\delta_S\widetilde{F}=0$. In this case, the geodesic spray
$\widetilde{S}$ of $\widetilde{F}$ and $S$ are related by the
projective factor $P=S(\widetilde{F})/2\widetilde{F}$. 

There are various characterisations for the projective equivalence of
two sprays (Finsler metrics), the corresponding equations are known
as Rapcs\'ak equations, \cite[\S 9.2.3]{SLK14}. In the next theorem we
provide new characterisations for the projective metrizability that
include the projective factor $P$.

\begin{thm} \label{thm:pm} Consider a spray $S$ and a Finsler metric
  $\widetilde{F}$. Then the following relations are equivalent:
\begin{itemize}
\item[$PM_1)$] $\delta_S\widetilde{F}=0$;
\item[$PM_2)$] $d_h\widetilde{F}= d_J(P\widetilde{F})$;
\item[$PM_3)$] $\delta_S\widetilde{F}^2=2Pd_J\widetilde{F}^2$;
\item[$PM_4)$] $d_hd_J\widetilde{F}^2 = d_JP \wedge
  d_J\widetilde{F}^2$ and $S\widetilde{F}=2P\widetilde{F}$.
\end{itemize} 
\end{thm}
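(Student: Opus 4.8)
The plan is to fix the projective factor $P=S(\widetilde F)/(2\widetilde F)$ once and for all, so that $S\widetilde F=2P\widetilde F$ holds by definition (this is the second relation in $PM_4$), and then to organise every equivalence around the semi-basic $1$-form
\[
\omega:=\delta_S\widetilde F^{2}-2P\,d_J\widetilde F^{2}.
\]
First I would dispatch $PM_1\Leftrightarrow PM_2$ directly from the middle expression in \eqref{elform}: since $\mathcal L_S\widetilde F=S\widetilde F=2P\widetilde F$, one gets $\delta_S\widetilde F=d_J\mathcal L_S\widetilde F-2d_h\widetilde F=2\bigl(d_J(P\widetilde F)-d_h\widetilde F\bigr)$, so $\delta_S\widetilde F=0$ if and only if $d_h\widetilde F=d_J(P\widetilde F)$.

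For $PM_1\Leftrightarrow PM_3$ I would record the Leibniz-type identity $\delta_S(f^{2})=2f\,\delta_Sf+2(Sf)\,d_Jf$, valid for any function $f$ by a one-line check on the local form of $\delta_S$ in \eqref{elform}. Taking $f=\widetilde F$ and using $2(S\widetilde F)d_J\widetilde F=4P\widetilde F\,d_J\widetilde F=2P\,d_J\widetilde F^{2}$ gives $\omega=2\widetilde F\,\delta_S\widetilde F$; since $\widetilde F>0$ this yields $PM_3\ (\omega=0)\Leftrightarrow PM_1$ immediately.

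The substantial step is $PM_3\Leftrightarrow PM_4$, i.e.\ $\omega=0$ iff $d_hd_J\widetilde F^{2}=d_JP\wedge d_J\widetilde F^{2}$. Using the other expression $\delta_S\widetilde F^{2}=d_J(S\widetilde F^{2})-2d_h\widetilde F^{2}$ from \eqref{elform}, together with $d_J^{2}=0$ and the anticommutation $d_Jd_h+d_hd_J=0$ on functions (the Fr\"olicher--Nijenhuis identity $d_Jd_h+d_hd_J=d_{[J,h]}$ with $[J,h]=0$, which reflects the symmetry $\partial^{2}G^{k}/\partial y^{i}\partial y^{j}=\partial^{2}G^{k}/\partial y^{j}\partial y^{i}$ of the nonlinear connection), I would compute
\[
d_J\omega=-2\,d_Jd_h\widetilde F^{2}-2\,d_JP\wedge d_J\widetilde F^{2}
=2\bigl(d_hd_J\widetilde F^{2}-d_JP\wedge d_J\widetilde F^{2}\bigr).
\]
Hence the first relation of $PM_4$ is exactly $d_J\omega=0$, and the forward implication $PM_3\Rightarrow PM_4$ is immediate.

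The main obstacle is the converse, recovering $\omega=0$ from $d_J\omega=0$: because $d_J$ only detects the $y$-symmetric part of $\omega_i$, closedness cannot by itself force vanishing, and one must exploit homogeneity. Since the components of $\delta_S\widetilde F^{2}$ and of $2P\,d_J\widetilde F^{2}$ are fiber-homogeneous of degree $2$, so are the $\omega_i$, and a direct Euler computation gives the homotopy identity $3\omega=d_J\bigl(i_S\omega\bigr)+i_S\,d_J\omega$ for semi-basic $1$-forms of fiber-degree $2$. It then remains to check $i_S\omega=0$: contracting with $S$ and using $i_Sd_J\widetilde F^{2}=\mathcal C(\widetilde F^{2})=2\widetilde F^{2}$ together with $i_S\delta_S\widetilde F^{2}=S(\widetilde F^{2})=2\widetilde F\,S\widetilde F$ gives $i_S\omega=2\widetilde F\,S\widetilde F-2P\cdot 2\widetilde F^{2}$, which vanishes precisely because $S\widetilde F=2P\widetilde F$ — this is where the second relation of $PM_4$ is indispensable. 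Feeding $i_S\omega=0$ and $d_J\omega=0$ into the homotopy identity yields $3\omega=0$, hence $\omega=0$, which is $PM_3$; this closes the cycle $PM_1\Leftrightarrow PM_2\Leftrightarrow PM_3\Leftrightarrow PM_4$.
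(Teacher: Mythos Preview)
Your argument is correct, but the organisation differs from the paper's. The paper runs the cycle $PM_1\Rightarrow PM_2\Rightarrow PM_3\Rightarrow PM_4\Rightarrow PM_1$; the first three arrows are essentially the same computations you give (Leibniz expansion, applying $d_J$, contracting with $S$). The genuine divergence is in closing the cycle. For $PM_4\Rightarrow PM_1$ the paper contracts the relation $d_hd_J\widetilde F^{2}=d_JP\wedge d_J\widetilde F^{2}$ with $i_S$ and invokes the Fr\"olicher--Nijenhuis commutator $i_Sd_h=-d_hi_S+\mathcal L_{hS}+i_{[h,S]}$ (together with $J\circ[h,S]=-v$); after rewriting in terms of $\widetilde F$ and substituting $P=S\widetilde F/2\widetilde F$ it arrives at the self-annihilating identity $\delta_S\widetilde F=-\tfrac12\,\delta_S\widetilde F$. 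You instead package everything in the single semi-basic $1$-form $\omega=\delta_S\widetilde F^{2}-2P\,d_J\widetilde F^{2}$, observe that $PM_4$ says exactly $d_J\omega=0$ and $i_S\omega=0$, and close via the Euler--Cartan homotopy $3\omega=d_J(i_S\omega)+i_S\,d_J\omega$ for degree-$2$ semi-basic $1$-forms. Your route is more transparent about \emph{why} the converse works---it isolates homogeneity as the sole mechanism for recovering $\omega$ from $d_J\omega$---and avoids the somewhat opaque $i_Sd_h$ commutator and the ad hoc factor $-\tfrac12$; the paper's route, on the other hand, stays closer to the standard spray-geometry toolkit already set up in \cite{GM00} and does not require introducing an auxiliary object. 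One minor point: you fix $P=S\widetilde F/2\widetilde F$ from the outset, which makes the second clause of $PM_4$ tautological; the paper instead derives this relation inside each implication (by evaluating on $S$), so that $P$ is treated as an a priori unknown $1$-homogeneous function. Both readings of the statement are legitimate, but it is worth flagging.
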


\begin{proof}
We assume that the relation $PM_1$ is true and hence the spray $S$ is
projectively related to the geodesic spray $\widetilde{S}$ of the
Finsler metric $\widetilde{F}$. In view of formulae \eqref{elform} we have that
$\delta_S\widetilde{F}=0$ is equivalent to
$d_JS\widetilde{F}=2d_h\widetilde{F}$. Since $S$ and $\widetilde{S}$
are projectively related, the projective factor is given by
$P=S\widetilde{F}/2\widetilde{F}$. We replace
$S\widetilde{F}=2P\widetilde{F}$  in the previous
equation and obtain $d_J(P\widetilde{F})=d_h\widetilde{F}$, which is relation
$PM_2$.

If we evaluate both sides of relation $PM_2$ on the spray $S$ we
obtain $S\widetilde{F}=2P\widetilde{F}$ and hence
$4P\widetilde{F}^2=S\widetilde{F}^2$.  Multiplying $PM_2$ by $4\widetilde{F}$ we obtain
$2d_h\widetilde{F}^2=4\widetilde{F}
d_J(P\widetilde{F}) = d_J(4P\widetilde{F}^2) - 2Pd_J\widetilde{F}^2$
and hence  $2d_h\widetilde{F}^2=  d_J(S\widetilde{F}^2) -
2Pd_J\widetilde{F}^2$. Again using formulae \eqref{elform}, last
formula can be written as
$\delta_S\widetilde{F}^2=2Pd_J\widetilde{F}^2$, which is relation
$PM_3$.

We start with relation $PM_3$ that is equivalent to $2d_h\widetilde{F}^2=  d_J(S\widetilde{F}^2) -
2Pd_J\widetilde{F}^2$. We apply $d_J$ to both sides of this relation,
use the commutation $[d_h, d_J]=0$ and $d_J^2=0$ and hence we obtain
the first relation $PM_4$. If we evaluate the semi-basic $1$-forms
from both sides of relation
$PM_3$ on the spray $S$ we obtain $S
\widetilde{F}^2=4P\widetilde{F}^2$, which gives the second relation $PM_4$.

Finally, we prove that the two relations $PM_4$ imply the projective
metrizability condition $PM_1$. From first relation $PM_4$ we obtain 
\begin{eqnarray*}
i_S d_hd_J\widetilde{F}^2 = i_S(d_JP \wedge
  d_J\widetilde{F}^2).
\end{eqnarray*}
We use the commutation formula $i_Sd_h=-d_hi_S + {\mathcal L}_{hS} +
i_{[h,S]}$ \cite[Appendix A]{GM00} and the fact that $J\circ [h,S]=-v$
to obtain
\begin{eqnarray*}
-2d_h\widetilde{F}^2 + {\mathcal L}_Sd_J\widetilde{F}^2 -
  d_v\widetilde{F}^2 = P d_J\widetilde{F}^2 - 2\widetilde{F}^2 d_JP. \label{eq1}
\end{eqnarray*}
We reformulate this formula in terms of the Finsler metric
$\widetilde{F}$ and use the second relation $PM_4$ to replace the projective 
factor $P$ and to obtain 
\begin{eqnarray*}
2S\widetilde{F} d_J\widetilde{F}  + 2 \widetilde{F}  {\mathcal
  L}_Sd_J\widetilde{F}  - 2 \widetilde{F} d\widetilde{F}  - 2
  \widetilde{F} d_h\widetilde{F} = S\widetilde{F} d_J\widetilde{F}  - 2\widetilde{F}^2d_J\left(\frac{S\widetilde{F}}{2\widetilde{F}}\right).
\end{eqnarray*}  
We divide both sides of the above formula by $2\widetilde{F}$ and
therefore
\begin{eqnarray*}
 {\mathcal  L}_Sd_J\widetilde{F}  - d\widetilde{F} = d_h \widetilde{F}
  - \frac{S\widetilde{F}}{2\widetilde{F}} d_J\widetilde{F} - \widetilde{F} d_J \left(\frac{S\widetilde{F}}{2\widetilde{F}}\right).
\end{eqnarray*}
Using formulae \eqref{elform}, the left hand side of the above
relation represents the semi-basic $1$-form $\delta_S\widetilde{F}$. The right hand side can
be written as $d_h \widetilde{F} -\frac{1}{2}
d_JS\widetilde{F}=-\frac{1}{2} \delta_S\widetilde{F}$. Therefore, last
formula implies
$\delta_S\widetilde{F}=-\frac{1}{2}\delta_S\widetilde{F}$ and hence we
proved that the Hamel condition $PM_1$ is satisfied.
\end{proof}

\begin{prop} \label{prop:drdj}
Consider two projectively related Finsler
metrics $F$ and $\widetilde{F}$, with the projective factor $P$. If $R$ is the curvature tensor of the Finsler metric $F$ then 
\begin{eqnarray}
d_Rd_J\widetilde{F}^2 = d_hd_JP \wedge d_J\widetilde{F}^2. \label{drdj1}
\end{eqnarray}
If the Finsler metric $F$ has constant curvature then 
\begin{eqnarray}
 d_hd_JP \wedge d_J\widetilde{F}^2=0. \label{drdj2}
\end{eqnarray}
If both Finsler metrics are reducible to Riemannian metrics
  and $F$ has constant curvature then the projective factor satisfies 
\begin{eqnarray}
 d_hd_JP =0. \label{drdj3}
\end{eqnarray}
\end{prop}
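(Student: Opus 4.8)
The plan is to establish the three identities in order, each feeding into the next, and the guiding idea is that the curvature derivation $d_R$ acting on the semi-basic $1$-form $d_J\widetilde{F}^2$ can be evaluated in two complementary ways: abstractly through the Fr\"olicher--Nijenhuis calculus, and concretely by a vertical contraction in coordinates. Throughout I may use that, since $\widetilde{F}$ is projectively related to the geodesic spray of $F$ with factor $P$, Theorem \ref{thm:pm} applies; in particular the first relation $PM_4$, namely $d_hd_J\widetilde{F}^2 = d_JP\wedge d_J\widetilde{F}^2$, is available.

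For \eqref{drdj1} I would start from the identity $d_h^2 = d_R$, which is a consequence of $R=\tfrac12[h,h]$ together with the Fr\"olicher--Nijenhuis rule $[d_h,d_h]=d_{[h,h]}$. Applying it to $d_J\widetilde{F}^2$ gives $d_Rd_J\widetilde{F}^2 = d_h\!\left(d_hd_J\widetilde{F}^2\right)$, and the first relation $PM_4$ replaces the inner term by $d_JP\wedge d_J\widetilde{F}^2$. It then remains to differentiate this wedge with the degree-one derivation $d_h$, for which the graded Leibniz rule yields
\[
d_h\!\left(d_JP\wedge d_J\widetilde{F}^2\right) = d_hd_JP\wedge d_J\widetilde{F}^2 - d_JP\wedge d_hd_J\widetilde{F}^2 .
\]
Substituting $PM_4$ once more turns the last term into $d_JP\wedge d_JP\wedge d_J\widetilde{F}^2$, which vanishes because $d_JP$ is a $1$-form. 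This gives exactly \eqref{drdj1}.

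For \eqref{drdj2}, by \eqref{drdj1} it is enough to prove $d_Rd_J\widetilde{F}^2=0$ when $F$ has constant curvature. Here I would pass to local coordinates. Since $R$ takes values in the vertical subbundle while $d_J\widetilde{F}^2$ is semi-basic, the contraction $i_Rd_J\widetilde{F}^2$ vanishes, so $d_Rd_J\widetilde{F}^2 = i_R d\big(d_J\widetilde{F}^2\big)$, and a short computation gives the clean expression
\[
d_Rd_J\widetilde{F}^2 = R^i_{jk}\,\frac{\partial^2\widetilde{F}^2}{\partial y^i\partial y^l}\,dx^j\wedge dx^k\wedge dx^l = 2R^i_{jk}\,\widetilde{g}_{il}\,dx^j\wedge dx^k\wedge dx^l,
\]
valid for any $F$. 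Inserting the constant-curvature form $R^i_{jk}=\kappa\left(g_{sk}\delta^i_j - g_{sj}\delta^i_k\right)y^s$ associated with \eqref{cfc} produces two summands, each of the shape (symmetric tensor $\widetilde{g}$) contracted against $dx^j\wedge dx^k\wedge dx^l$; both vanish because $\widetilde{g}$ is symmetric while the triple wedge is skew. Hence $d_Rd_J\widetilde{F}^2=0$ and \eqref{drdj2} follows.

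For \eqref{drdj3} the extra input is that in the Riemannian case both geodesic sprays are affine, so $\widetilde{S}=S-2P\mathcal{C}$ forces the projective factor to be fibrewise linear, $P=p_i(x)y^i$; consequently $d_JP=p_i(x)\,dx^i$ is basic, and therefore $d_hd_JP$ is a basic $2$-form, independent of the fibre coordinate $y$. The hard part is that \eqref{drdj2} only supplies $d_hd_JP\wedge d_J\widetilde{F}^2=0$, and a wedge against a single $1$-form cannot by itself annihilate a $2$-form. I would resolve this by letting $y$ vary: since $d_J\widetilde{F}^2 = 2\widetilde{g}_{ij}(x)y^j\,dx^i$ and $\widetilde{g}(x)$ is nondegenerate, the covector $d_J\widetilde{F}^2$ sweeps out all of $T^*_xM$ as $y$ ranges over the fibre, whereas $d_hd_JP$ does not depend on $y$. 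Thus $d_hd_JP\wedge\alpha=0$ for every $\alpha\in T^*_xM$, and since $\dim M\ge 3$ this forces the basic $2$-form $d_hd_JP$ to vanish, proving \eqref{drdj3}. The main obstacle is precisely this final step, where the fibrewise linearity of $P$ (hence the $y$-independence of $d_hd_JP$) and the spanning property of $d_J\widetilde{F}^2$ must be combined with the dimension hypothesis.
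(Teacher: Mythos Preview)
Your proof is correct. For \eqref{drdj1} you follow exactly the paper's route (apply $d_h$ to $PM_4$, using $d_h^2=d_R$), only spelling out the Leibniz step in more detail.

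For \eqref{drdj2} and \eqref{drdj3} your arguments are correct but differ from the paper's. For \eqref{drdj2} the paper stays intrinsic: it writes $R=\kappa F\,d_JF\wedge J$ and uses the Fr\"olicher--Nijenhuis identity $d_{\alpha\wedge J}=\alpha\wedge d_J$ together with $d_J^2=0$ to get $d_Rd_J\widetilde{F}^2=\kappa F\,d_JF\wedge d_J^2\widetilde{F}^2=0$ in one line; you instead pass to coordinates and kill the expression via the symmetry of $\widetilde g_{il}$ against the skew wedge. Both work; the paper's version is shorter and avoids the local formula for $d_R$, while yours makes the mechanism (symmetric tensor contracted into an alternating form) transparent. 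For \eqref{drdj3} the paper writes out the cyclic identity $a_{ij}\delta^s_k+a_{ki}\delta^s_j+a_{jk}\delta^s_i=0$ and contracts $s=k$ to obtain $(n-2)a_{ij}=0$; your argument---that $d_hd_JP$ is basic, while $d_J\widetilde{F}^2$ ranges over all of $T^*_xM$ by nondegeneracy of $\widetilde g$, so a $2$-form annihilated by wedging with every $1$-form in dimension $\ge 3$ must vanish---is the same computation phrased invariantly. The paper's contraction makes the role of $n\ge 3$ explicit as the nonvanishing of $n-2$; your phrasing hides this in the linear-algebra fact, which you might state once for completeness.
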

 
\begin{proof} Using the definition of the curvature tensor, last
  formula \eqref{hphi}, we have $d_R=\frac{1}{2}d_{[h,h]}=d^2_h$. If
  we apply $d_h$ of both sides of first formula $PM_4$ from Theorem
  \ref{thm:pm}, we obtain that formula \eqref{drdj1} is true. 

If the Finsler metric $F$ has constant curvature then the curvature
tensor $R$ is given by formula \eqref{cfc}. Using this, we have that 
\begin{eqnarray*}
d_Rd_J\widetilde{F}^2 =d_{\kappa F d_JF\wedge J} d_J\widetilde{F}^2 =
  \kappa F d_JF\wedge d_J d_J\widetilde{F}^2 = 0,
\end{eqnarray*}
and hence formula \eqref{drdj1} implies formula \eqref{drdj2}.

If both Finsler metrics $\widetilde{F}$ and $F$ are reducible to
Riemannian metrics, then their geodesic sprays are quadratic and hence
the projective factor $P$ is linear in the fiber coordinates. Assume
that $P(x,y)=a_i(x)y^i$. It follows that $d_JP=a_i(x)dx^i$ is a basic
$1$-form and hence 
\begin{eqnarray*} d_hd_JP=a_{ij}dx^i\wedge dx^j, \quad
  a_{ij}=\frac{1}{2}\left( \frac{\partial a_j}{\partial x^i} -
  \frac{\partial a_i}{\partial x^j} \right). 
\end{eqnarray*}  
If $F$ has constant curvature then formula \eqref{drdj2} is satisfied.
If the Riemannian metric $\widetilde{F}$ has the expression
$\widetilde{F}^2(x,y)=\widetilde{g}_{kl}(x)y^ky^l$ then formula
\eqref{drdj2} can be written as follows
\begin{eqnarray*}
\sum_{i, j, k \  cyclic}a_{ij}(x)\widetilde{g}_{kl}(x)y^l=0. 
\end{eqnarray*}
The linearity in $y$ of the above formula implies 
\begin{eqnarray*}
\sum_{i, j, k \ cyclic}a_{ij}(x)\widetilde{g}_{kl}(x)=0. 
\end{eqnarray*}
We multiply last formula by $\widetilde{g}^{ls}$ and obtain 
 \begin{eqnarray*}
a_{ij}\delta^s_k+ a_{ki}\delta^s_j + a_{jk}\delta^s_i = 0.
\end{eqnarray*}
In this formula we make the contraction $s=k$ and therefore
$a_{ij}=0$, which gives the desired conclusion, $d_hd_JP=0$. 
\end{proof}

\section{Weyl-type curvature tensor and Finsler metrics of constant curvature}

In Riemannian geometry, a metric has constant sectional curvature if
and only if the projective Weyl curvature tensor vanishes. Since the projective Weyl
tensor is a projective invariant it follows that projective deformations
between Riemannian metrics preserve the property of having constant
curvature, and this result is known as Beltrami Theorem.

In Finsler geometry, the projective Weyl tensor, which is a projective
invariant, characterises the metrics of scalar (not-necessarily constant)
flag curvature, \cite[Lemma 8.2.2]{Shen01}. There have been several
proposals for a Weyl-type curvature tensors, of $(1,3)$-type, that
characterise Finsler metrics of constant flag curvature, \cite{AZ06,
  NT10, SM86}, none of them being invariant under projective deformations. 

In this section, we define yet another Weyl-type curvature tensor, of $(1,1)$-type,
that characterises Finsler metrics of constant curvature in
Theorem \ref{thm:cfc}. The Weyl-type curvature tensor coincides with
the projective Weyl tensor if and only if the Finsler metric has
constant curvature and it reduces to the classical projective Weyl
tensor in the Riemannian context.  

Consider $S$ a spray with Jacobi endomorphism $\Phi$. Inspired by
\cite{BM13}, we define the
Weyl-type curvature tensor
\begin{eqnarray}
W_0=\Phi - \frac{1}{n-1} \left(\operatorname{Tr} \Phi\right) J +
  \frac{1}{2(n-1)} d_J\left(\operatorname{Tr} \Phi\right) \otimes
  {\mathcal C}. \label{w0} \end{eqnarray}
This tensor has been used in \cite[(1.20)]{LS18} to characterise
sprays with scalar curvature that does not depend on fiber
coordinates. Locally, the Weyl-type curvature tensor is given by 
\begin{eqnarray}
W_0=W^i_{0j}\frac{\partial}{\partial y^i}\otimes dx^j, \quad W^i_{0j}=R^i_j -
  \frac{1}{n-1}R^l_l\delta^i_j  + \frac{1}{2(n-1)}\frac{\partial
  R^l_l}{\partial y^j} y^i. \label{w0l}
\end{eqnarray}
The Weyl-type curvature tensor is traceless,
$\operatorname{Tr}(W_0)=W^i_{0i}=0$, and satisfies $W_0(S)=W^i_{0j}y^j
\frac{\partial}{\partial y^i} = 0$, for any spray $S$.

If the spray $S$ is affine, the Weyl-type curvature tensor \eqref{w0}
is quadratic in the fiber coordinates and can be
expressed as follows: 
\begin{eqnarray}
W_0=W^i_{jkl}(x)y^jy^l\frac{\partial}{\partial y^i}\otimes dx^k, \label{w0r}\end{eqnarray}
where $W^i_{jkl}$ is the classical projective Weyl tensor
\begin{eqnarray}
W^i_{jkl}=R^i_{jkl} - \frac{1}{n-1}\left(Ric_{jl}\delta^i_k - Ric_{jk}
  \delta^i_l \right), \label{rw} \end{eqnarray}
and $Ric_{ij}=R^l_{ilj}$ is the Ricci tensor. Formula \eqref{w0r} shows that in the Riemannian context one can
recover the projective Weyl tensor from the Weyl-type tensor
\eqref{w0}.  

The classical Weyl tensor in Finsler geometry, which is a projective
invariant, characterises Finsler metrics of scalar flag curvature,
\cite[Theorem 26.1]{M86},  and it is given by:
\begin{eqnarray}
W^i_j = R^i_j - \frac{1}{n-1} R^l_l \delta^i_j -
  \frac{1}{n+1} \frac{\partial}{\partial y^m}\left( R^m_j -
  \frac{1}{n-1} R^l_l \delta^m_j \right) y^i. \label{weyl}
\end{eqnarray}
Next theorem provides a characterisation for Finsler metrics of
constant curvature using the Weyl-type curvature tensor \eqref{w0}.
\begin{thm} \label{thm:cfc}
For a Finsler metric the following conditions are equivalent:
\begin{itemize}
\item[i)] The Finsler metric has constant flag curvature.
\item[ii)] The Weyl-type curvature tensor \eqref{w0} vanishes. 
\item[iii)] The Finsler metric has scalar flag curvature and the Weyl
  tensors \eqref{w0l} and \eqref{weyl} coincide.
\end{itemize}
\end{thm}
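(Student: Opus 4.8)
The plan is to prove the substantive core as the two-way implication (i) $\Leftrightarrow$ (ii) and then to obtain the equivalence with (iii) from the quoted fact that the classical Weyl tensor \eqref{weyl} vanishes exactly when the metric has scalar flag curvature. For (i) $\Rightarrow$ (ii) I would argue by direct substitution. If $F$ has constant flag curvature $\kappa$, then $R^i_j=\kappa\bigl(F^2\delta^i_j-\tfrac12\frac{\partial F^2}{\partial y^j}y^i\bigr)$. Euler's theorem applied to the $2$-homogeneous function $F^2$ gives $\frac{\partial F^2}{\partial y^j}y^j=2F^2$, whence $\operatorname{Tr}\Phi=R^l_l=(n-1)\kappa F^2$ and $\frac{\partial R^l_l}{\partial y^j}=(n-1)\kappa\frac{\partial F^2}{\partial y^j}$. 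Inserting these into the local form \eqref{w0l} makes the three terms cancel in pairs, so $W_0=0$.

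The main step is (ii) $\Rightarrow$ (i). Setting $\rho:=\frac{1}{n-1}R^l_l$, the vanishing of \eqref{w0l} reads $R^i_j=\rho\,\delta^i_j-\tfrac12\frac{\partial\rho}{\partial y^j}y^i$. This shape alone is weaker than scalar flag curvature (contracting with $y^j$ only reproduces the tautology $W_0(S)=0$), so I would bring in the Finslerian hypothesis through the symmetry of the Jacobi endomorphism of a geodesic spray, namely $g_{ik}R^k_j=g_{jk}R^k_i$, which follows from $d_hF^2=0$. Lowering an index and using $g_{ik}y^k=\tfrac12\frac{\partial F^2}{\partial y^i}$ turns this into $R_{ij}=\rho\,g_{ij}-\tfrac14\frac{\partial\rho}{\partial y^j}\frac{\partial F^2}{\partial y^i}$, so the symmetry $R_{ij}=R_{ji}$ is exactly $\frac{\partial\rho}{\partial y^j}\frac{\partial F^2}{\partial y^i}=\frac{\partial\rho}{\partial y^i}\frac{\partial F^2}{\partial y^j}$. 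Contracting this with $y^j$ and applying Euler's theorem to the $2$-homogeneous $\rho$ and $F^2$ yields $F^2\frac{\partial\rho}{\partial y^i}=\rho\frac{\partial F^2}{\partial y^i}$, i.e. $\frac{\partial}{\partial y^i}\bigl(\rho/F^2\bigr)=0$. Hence $\rho=\kappa(x)F^2$ with $\kappa$ independent of the fibre coordinates, and therefore $R^i_j=\kappa(x)\bigl(F^2\delta^i_j-\tfrac12\frac{\partial F^2}{\partial y^j}y^i\bigr)$ is scalar flag curvature with direction-independent $\kappa$. Since $\dim M\ge 3$, the Finslerian Schur lemma then forces $\kappa$ to be constant, which is (i).

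For the equivalence with (iii) I would use that \eqref{weyl} vanishes if and only if $F$ has scalar flag curvature: under scalar flag curvature $W^i_j=0$, so the requirement that \eqref{w0l} and \eqref{weyl} coincide is equivalent to $W_0=0$, and with (i) $\Leftrightarrow$ (ii) this gives (iii) $\Rightarrow$ (ii) (hence (i)); conversely constant flag curvature gives both scalar flag curvature, so $W=0$, and $W_0=0$ by (ii), whence $W_0=W$, which is (iii). I expect the genuine obstacle to be the passage (ii) $\Rightarrow$ (i): the vanishing of $W_0$ only pins down the shape $R^i_j=\rho\,\delta^i_j-\tfrac12\frac{\partial\rho}{\partial y^j}y^i$, and converting this into constant flag curvature needs two inputs that are not purely algebraic, the metric symmetry of the Jacobi endomorphism (to force $\rho$ proportional to $F^2$, i.e.\ scalar flag curvature) and Schur's lemma in dimension at least three (to upgrade scalarity to constancy); by comparison the computation (i) $\Rightarrow$ (ii) and the bookkeeping for (iii) are routine.
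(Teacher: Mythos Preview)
Your argument is correct and follows the same architecture as the paper for the main implication (ii) $\Rightarrow$ (i): both extract from $W_0=0$ the shape $R^i_j=\rho\,\delta^i_j-\tfrac12(\partial\rho/\partial y^j)y^i$, then use the metric hypothesis to obtain $F^2\,\partial_{y^i}\rho=\rho\,\partial_{y^i}F^2$, hence $d_J(\rho/F^2)=0$, and conclude by the Finslerian Schur lemma. The only difference is the way the metric condition is invoked: the paper uses the identity $d_\Phi F^2=0$ (a consequence of $d_hF^2=0$), which in coordinates is $R_{kj}y^k=0$; you use the stronger but equally standard symmetry $g_{ik}R^k_j=g_{jk}R^k_i$ and then contract with $y^j$. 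Both routes produce the same key identity, so this is a cosmetic variation.

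Your treatment of (iii) is genuinely different and somewhat slicker. The paper computes directly, under the scalar flag curvature assumption, what the equality $W_0=W$ amounts to and shows it reduces to $\partial\kappa/\partial y^j=0$, then invokes Schur. You instead leverage the quoted fact that the projective Weyl tensor \eqref{weyl} vanishes precisely under scalar flag curvature, so that (iii) collapses to ``scalar flag curvature and $W_0=0$'', which via (i) $\Leftrightarrow$ (ii) is immediately equivalent to (i). This avoids the explicit computation at the cost of importing the characterisation of scalar flag curvature as a black box; the paper's computation is more self-contained but longer.
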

\begin{proof}
For the implication $i) \Longrightarrow ii)$, we assume that the Finsler
metric $F$ has constant flag curvature $\kappa$. Therefore, the Jacobi
endomorphism is given by formula \eqref{sfc},  with $\kappa$ constant. It follows that $\operatorname{Tr}
\Phi=(n-1)\kappa F^2$, $d_J\left(\operatorname{Tr}
  \Phi\right)=2(n-1)\kappa F d_JF$ and hence the Weyl-type curvature tensor
\eqref{w0} vanishes. 

For the implication $ii) \Longrightarrow i)$, we assume that
the Weyl-type curvature tensor \eqref{w0} vanishes, which means that the Jacobi
endomorphism is given by  
\begin{eqnarray}
\Phi = \frac{1}{n-1} \left(\operatorname{Tr} \Phi\right) J -
  \frac{1}{2(n-1)} d_J\left(\operatorname{Tr} \Phi\right) \otimes
  {\mathcal C}. \label{phi0} \end{eqnarray}
Consider $S$ the geodesic spray of the Finsler metric $F$, which
means that it satisfies $d_hF^2=0$. Using second formula \eqref{hphi} we obtain $d_{\Phi}F^2=0$. In the last formula, we
use the expression \eqref{phi0} of the Jacobi endomorphism to get
\begin{eqnarray}
\left(\operatorname{Tr} \Phi\right) d_JF^2 - F^2
  d_J\left(\operatorname{Tr} \Phi\right)=0. \label{djk}
\end{eqnarray}
If $\Phi=0$, which is equivalent to $\operatorname{Tr} \Phi=0$, we
have that $\kappa=0$. Assume now that $\operatorname{Tr} \Phi\neq 0$.

One can reformulate formula \eqref{djk}  as follows:
\begin{eqnarray*}
\frac{d_J F^2}{F^2} = \frac{ d_J\left(\operatorname{Tr}
  \Phi\right)}{\operatorname{Tr} \Phi}. 
\end{eqnarray*}
Therefore, the Jacobi endomorphism \eqref{phi0} can be written as
\begin{eqnarray*}
\Phi = \frac{1}{n-1} \operatorname{Tr} \Phi \left( J - \frac{ d_J\left(\operatorname{Tr}
  \Phi\right)}{2 \operatorname{Tr} \Phi} \otimes {\mathcal C}\right) =
  \frac{\operatorname{Tr} \Phi }{(n-1)F^2} \left(F^2  J - F
  d_JF\otimes {\mathcal C}\right). \end{eqnarray*}
If we consider the function 
\begin{eqnarray}
\kappa =\frac{\operatorname{Tr} \Phi }{(n-1)F^2}, \label{kf2}
\end{eqnarray}
the Jacobi endomorphism $\Phi$ is given by formula \eqref{sfc}. Using
formula \eqref{djk}, it follows that the function $\kappa$, defined by
formula \eqref{kf2}, satisfies $d_J\kappa=0$.  Therefore $\kappa$ is
constant along the fibers of the tangent bundle and, in view of the 
Finslerian version of the Schur Lemma \cite[Proposition 26.1]{M86}, we obtain that $\kappa$ is constant and hence the Finsler
metric has constant flag curvature. 

We prove now the equivalence of the two conditions $i)$ and $iii)$. The Weyl
tensors \eqref{w0l} and \eqref{weyl} coincide if and only if 
\begin{eqnarray*}
\frac{1}{2(n-1)} \frac{\partial R^l_l}{\partial y^j}y^i =
  \frac{-1}{n+1} \left( \frac{\partial R^m_j}{\partial y^m} -
  \frac{1}{n-1} \frac{\partial R^l_l}{\partial y^j} \right) y^i,
\end{eqnarray*}
which is equivalent to 
\begin{eqnarray}
  \frac{1}{2} \frac{\partial R^l_l}{\partial
  y^j}y^i = - \frac{\partial R^m_j}{\partial y^m}y^i. \label{t1}
\end{eqnarray}
We assume now  that the Finsler metric has scalar flag curvature,
which means that the Jacobi endomorphism is given by formula
\eqref{sfc}. Therefore, we have $R^m_j=\kappa F^2 \delta^m_j -
\frac{\kappa}{2} \frac{\partial F^2}{\partial y^j}y^m$  and
$R^l_l=(n-1)\kappa F^2$. For the left hand side of formula
\eqref{t1}, we have 
\begin{eqnarray}
\frac{\partial R^l_l}{\partial y^j} = (n-1) \frac{\partial \kappa}{\partial
  y^j} F^2 + (n-1)\kappa \frac{\partial F^2}{\partial y^j}. \label{t2}
\end{eqnarray}
Using the homogeneity of the terms, we have for the right side of
formula \eqref{t1}:
\begin{eqnarray}
\frac{\partial R^m_j}{\partial y^m} =  \frac{\partial \kappa}{\partial
  y^j} F^2 - (n-1)\frac{\kappa}{2} \frac{\partial F^2}{\partial y^j}. \label{t3}
\end{eqnarray}
Using formulae \eqref{t2} and \eqref{t3} we have that \eqref{t1} is
true if and only if $\frac{\partial \kappa}{\partial y^j} =0$, which, 
in view of the Finslerian version of the Schur Lemma \cite[Proposition
26.1]{M86}, it means that the Finsler metric has constant flag curvature.
\end{proof}
If the Finslerian metric reduces to a Riemannian metric then Theorem
\ref{thm:cfc} reduces to the following characterisation of Riemannian
metrics of constant curvature. 
\begin{cor} \label{csc}
A Riemannian metric has constant sectional curvature if and only if the Weyl-type
curvature tensor \eqref{w0} vanishes.  
\end{cor}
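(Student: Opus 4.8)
The plan is to obtain this corollary as the Riemannian specialisation of Theorem \ref{thm:cfc}: a Riemannian metric is in particular a Finsler metric, so the equivalence $i) \Longleftrightarrow ii)$ applies verbatim, and the only work is to translate the Finslerian notion of constant flag curvature into the Riemannian notion of constant sectional curvature.

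First I would observe that when $F$ reduces to a Riemannian metric the metric tensor $g_{ij}$ is independent of the fibre coordinates, so the geodesic coefficients $G^i = \tfrac12\gamma^i_{jk}(x)y^j y^k$ are quadratic, the geodesic spray is affine, and the Weyl-type tensor takes the form \eqref{w0r}, namely $W_0 = W^i_{jkl}(x)y^j y^l\,\partial/\partial y^i \otimes dx^k$ with $W^i_{jkl}$ the classical projective Weyl tensor \eqref{rw}. In particular $W_0$ is nothing but the $y$-quadratic contraction of the Riemannian projective Weyl tensor, which is the content of the remark following \eqref{rw}.

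Next, by Theorem \ref{thm:cfc} the vanishing of $W_0$ is equivalent to $F$ having constant flag curvature, and for a Riemannian metric this coincides with constant sectional curvature. Indeed, the scalar flag curvature form \eqref{sfc} of the Jacobi endomorphism reduces, as recorded above, to the isotropy condition $R^i_{sjl} = \kappa\left(g_{sl}\delta^i_j - g_{sj}\delta^i_l\right)$, which says precisely that every tangent $2$-plane has sectional curvature $\kappa(x)$; since $\dim M \ge 3$, the Riemannian Schur Lemma forces $\kappa$ to be a constant. Conversely, a Riemannian metric of constant sectional curvature $\kappa$ has its curvature in exactly this isotropic form with $\kappa$ constant. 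Combining these two observations yields the stated equivalence.

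I expect the only delicate point to be resisting the temptation to argue directly from \eqref{w0r}: the vanishing of $W_0$ only yields the vanishing of the part of $W^i_{jkl}$ that is symmetric in the pair $(j,l)$ once contracted against $y^j y^l$, rather than of the full projective Weyl tensor, so a direct comparison with the classical statement would demand an extra symmetry argument. Routing the proof through Theorem \ref{thm:cfc}, whose implication $ii)\Longrightarrow i)$ is established at the level of the Jacobi endomorphism together with Schur's Lemma, cleanly bypasses this issue.
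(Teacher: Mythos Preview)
Your argument is correct and more economical than the paper's: you simply invoke Theorem~\ref{thm:cfc} and observe that for a Riemannian metric constant flag curvature coincides with constant sectional curvature. (Your appeal to the Riemannian Schur Lemma is actually redundant, since Theorem~\ref{thm:cfc} already delivers $\kappa$ constant, not merely $\kappa=\kappa(x)$; but this is harmless.) The paper, by contrast, takes precisely the route you flagged as ``delicate'': it works directly from the quadratic expression \eqref{w0r}, notes that $W_0=0$ forces $W^i_{jkl}+W^i_{lkj}=0$, and then combines this with the algebraic Bianchi identity and the skew-symmetry in the last two covariant indices to obtain
\[
0=W^i_{jkl}+W^i_{ljk}+W^i_{klj}=W^i_{jkl}-W^i_{lkj}-W^i_{jlk}=3W^i_{jkl},
\]
so that the full projective Weyl tensor vanishes; from there it recovers the isotropy of the Riemann tensor via the Ricci tensor and applies Schur. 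Your approach is shorter and exhibits the corollary as a genuine specialisation of the main theorem; the paper's approach has the extra payoff of showing that, in the Riemannian setting, $W_0=0$ is equivalent to the vanishing of the full classical projective Weyl tensor $W^i_{jkl}$ (not just of its $(j,l)$-symmetric part), making the link to classical Riemannian projective geometry explicit.
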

\begin{proof}
For an affine spray, the Weyl-type curvature tensor \eqref{w0} is given by
formula \eqref{w0r}. Therefore, if for a Riemannian metric $g$, the Weyl-type
curvature tensor \eqref{w0} vanishes, then the tensor \eqref{w0r}
vanishes as well. Since
$W_0$, in formula \eqref{w0r}, is quadratic in the fiber coordinates, then $W_0=0$ implies
$W^i_{jkl}+W^i_{lkj}=0$, the skew-symmetry in first and third
covariant indices. Using this symmetry and the properties of the projective Weyl
tensor (skew-symmetry in the last two covariant indices and
algebraic Bianchi identity) we obtain 
\begin{eqnarray*}
0=W^i_{jkl}+W^i_{ljk}+W^i_{klj}=W^i_{jkl}-W^i_{lkj}-W^i_{jlk}=3W^i_{jkl}.
\end{eqnarray*} 
Consequently, the projective Weyl tensor \eqref{rw} vanishes, which
means that the Riemannian curvature tensor is given by 
\begin{eqnarray*}
R^i_{jkl} =\frac{1}{n-1}\left(\delta^i_k Ric_{jl} - \delta^i_l
  Ric_{jk}\right), \end{eqnarray*}
which is equivalent to 
\begin{eqnarray*}
R_{ijkl} =\frac{1}{n-1}\left(g_{ik} Ric_{jl} - g_{il} Ric_{jk}\right). \end{eqnarray*}
The symmetry of the curvature tensor $R_{ijkl}=R_{klij}$ implies the
Ricci tensor $Ric_{ij}$ and the metric tensor $g_{ij}$ are
proportional, say $Ric_{ij}=(n-1)\kappa g_{ij}$. Therefore, the
curvature tensor is given by    
\begin{eqnarray*}
R^i_{jkl} =\kappa\left(\delta^i_k g_{jl} - \delta^i_l
  g_{jk}\right), \end{eqnarray*}  
which in view of Schur Lemma we have that $\kappa$ is constant and hence
the Riemannian metric has constant curvature $\kappa$.\end{proof}

Formula \eqref{kf2} shows how to recover the scalar curvature of a
Finsler metric from the fact that the Weyl-type
tensor \eqref{w0} vanishes. In the Riemannian context, formula \eqref{kf2} reduces to 
\begin{eqnarray*}
\kappa =\frac{Ric_{ij}y^iy^j }{(n-1)g_{ij}y^iy^j}= \frac{Ric_{ij}}{(n-1)g_{ij}}, 
\end{eqnarray*} which is true due to the fact that the vanishing of
the projective Weyl tensor \eqref{rw} implies $Ric_{ij}= (n-1)\kappa
g_{ij}$.

\section{A Finslerian version and a Finslerian proof of classical Beltrami Theorem}

Although the Weyl-type curvature tensor \eqref{w0} is not a projective
invariant, we find the class of projective deformations that preserve
Weyl-type curvature tensors and therefore we obtain the class of projective
deformations in Lemma \ref{lem:pw0}, which makes Beltrami Theorem work
in Finslerian setting, see Theorem \ref{thm:FBT}.  

\begin{lem} \label{lem:pw0}
The corresponding Weyl-type curvature tensors of two projectively related
sprays $S$ and $\widetilde{S}=S-2P{\mathcal C}$ are related by
\begin{eqnarray}
\widetilde{W_0}=W_0-\frac{3}{2}\delta_SP \otimes {\mathcal C}. \label{pw0}
\end{eqnarray}
\end{lem}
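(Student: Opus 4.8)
The plan is to compute how each of the three ingredients of the Weyl-type tensor $W_0$ — namely the Jacobi endomorphism $\Phi$, the trace $\operatorname{Tr}\Phi$, and the term $d_J(\operatorname{Tr}\Phi)\otimes\mathcal{C}$ — transforms under the projective change $\widetilde{S}=S-2P\mathcal{C}$, and then assemble the pieces. The foundational input is the classical transformation law for the Jacobi endomorphism under a projective deformation. Writing $\widetilde{\Phi}$ for the Jacobi endomorphism of $\widetilde{S}$, I would first recall (or rederive from $\widetilde{\Phi}=(\operatorname{Id}-\widetilde{h})\circ[\widetilde{S},\widetilde{h}]$ together with $\widetilde{h}=h-d_JP\otimes\mathcal{C}-\dots$) the standard formula of the form
\begin{eqnarray*}
\widetilde{\Phi}=\Phi+\bigl(P^2-S(P)\bigr)J-\dots\otimes\mathcal{C}+\dots\otimes d_JP,
\end{eqnarray*}
expressing $\widetilde{\Phi}-\Phi$ in terms of $P$, $d_JP$, $S(P)$ and $\mathcal{C}$. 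This is the single most important computational step and also the main obstacle: getting every term of the projective transformation of $\Phi$ correct, including the $\otimes\mathcal{C}$ contributions, is delicate, and the whole result hinges on these terms recombining into the Euler--Lagrange $1$-form $\delta_SP$.

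Next I would compute the trace. The point is that $J$ is traceless in the relevant sense while the terms proportional to $\otimes\mathcal{C}$ and $\otimes d_JP$ contribute to $\operatorname{Tr}\widetilde{\Phi}$ through their horizontal evaluation, so that $\operatorname{Tr}\widetilde{\Phi}-\operatorname{Tr}\Phi$ is an explicit scalar built from $P$, $S(P)$ and their derivatives. I would then differentiate with $d_J$ to get $d_J(\operatorname{Tr}\widetilde{\Phi})-d_J(\operatorname{Tr}\Phi)$, using $d_J^2=0$ and the homogeneity of $P$ (recall $P$ is positively $1$-homogeneous, so $\mathcal{C}(P)=P$ and $d_JP$ is basic-like after contraction with $\mathcal{C}$). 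Homogeneity is the lever that collapses several would-be independent terms: Euler's theorem turns expressions like $\mathcal{C}\bigl(S(P)\bigr)$ into constant multiples of $S(P)$, and this is what ultimately produces the clean coefficient $\tfrac{3}{2}$.

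Finally I would substitute the three transformation laws into the definition \eqref{w0} of $\widetilde{W_0}$ and collect terms. The strategy is to show that the $J$-terms coming from $\widetilde{\Phi}$ cancel exactly against the $\tfrac{1}{n-1}(\operatorname{Tr}\widetilde{\Phi})J$ correction, and likewise that the $\otimes d_JP$ and most of the $\otimes\mathcal{C}$ contributions cancel against the $\tfrac{1}{2(n-1)}d_J(\operatorname{Tr}\widetilde{\Phi})\otimes\mathcal{C}$ term, leaving only a residual multiple of $\mathcal{C}$. I expect the surviving coefficient of $\otimes\mathcal{C}$ to reorganise, via \eqref{elform}, precisely into $-\tfrac{3}{2}\delta_SP$, giving \eqref{pw0}. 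As a consistency sanity check I would verify the two structural identities already noted after \eqref{w0l}: that $\widetilde{W_0}$ is traceless (consistent with $\mathcal{C}$ contributing no trace in the relevant convention) and that $\widetilde{W_0}(\widetilde{S})=0$; the latter, evaluated against $\mathcal{C}=J\widetilde{S}$, should reduce to the Hamel-type contraction $i_S\delta_SP$ vanishing by homogeneity, confirming the coefficient.
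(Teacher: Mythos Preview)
Your plan is essentially the paper's proof: quote the transformation law for the Jacobi endomorphism under $\widetilde{S}=S-2P\mathcal{C}$, take traces, feed both into the definition \eqref{w0} of $\widetilde{W_0}$, and watch everything cancel except $-\tfrac{3}{2}(d_JSP-2d_hP)\otimes\mathcal{C}=-\tfrac{3}{2}\delta_SP\otimes\mathcal{C}$.

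Two small inaccuracies in your outline that you should fix before writing it up. First, the projective transformation of $\Phi$ has \emph{no} term of the form $(\dots)\otimes d_JP$; it is exactly
\[
\widetilde{\Phi}=\Phi+(P^2-SP)\,J-\bigl(P\,d_JP+d_JSP-3\,d_hP\bigr)\otimes\mathcal{C},
\]
so there is nothing of that shape to cancel later. Second, $J$ is not traceless in this setting: $\operatorname{Tr}J=n$, and the $(\dots)\otimes\mathcal{C}$ piece contributes $-(P^2-SP)$ to the trace (via homogeneity: $i_{\mathcal C}d_JP=P$, $i_{\mathcal C}d_JSP=2SP$, $i_{\mathcal C}d_hP=SP$), giving $\operatorname{Tr}\widetilde{\Phi}=\operatorname{Tr}\Phi+(n-1)(P^2-SP)$. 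With those two corrections your plan goes through verbatim and coincides with the paper's argument.
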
 
\begin{proof}
For two projectively related sprays $S$ and $\widetilde{S}=S-2P{\mathcal C}$,
the corresponding Jacobi endomorphisms are related by the following
formula \cite[(4.8)]{BM12}: 
\begin{eqnarray*}
\widetilde{\Phi} = \Phi + (P^2-SP)J -\left(Pd_JP + d_JSP -
  3d_hP\right)\otimes {\mathcal C}. 
\end{eqnarray*}
This implies $\operatorname{Tr} \widetilde\Phi = \operatorname{Tr}
\Phi + (n-1) (P^2-SP)$. Using these formulae, the Weyl-type curvature tensor
\eqref{w0} of the spray
$\widetilde{S}$ can be written as follows: 
\begin{eqnarray*}
\widetilde{W_0}  & = & \Phi + (P^2-SP)J -\left(Pd_JP + d_JSP -
  3d_hP\right)\otimes {\mathcal C}  \\
 & - & \frac{1}{n-1} \left(\operatorname{Tr} \Phi\right) J -
                       (P^2-SP)J +  \frac{1}{2(n-1)}
                       d_J\left(\operatorname{Tr} \Phi\right) \otimes
                       {\mathcal C} + \frac{1}{2} d_J(P^2 - SP)
                       \otimes {\mathcal C}  \\ & = & W_0 - \frac{3}{2}
                                                      \left( d_JSP -
                                                      2d_hP\right)
                                                      \otimes
                                                      {\mathcal
                                                      C},
\end{eqnarray*} 
and hence formula \eqref{pw0} is true.
\end{proof}

We can formulate the following version of Beltrami Theorem in Finsler geometry.
\begin{thm} \label{thm:FBT} \emph{(Finslerian version of Beltrami Theorem)}
Consider two projectively related Finsler
metrics $F$ and $\widetilde{F}$. If one of the Finsler metrics has
constant curvature then the other one has also constant curvature if
and only if the projective factor $P$ satisfies either one of the
following equivalent Hamel conditions:
\begin{itemize}
\item[i)] $\delta_SP=0$,
\item[ii)] $d_hd_JP=0$.
\end{itemize}
\end{thm}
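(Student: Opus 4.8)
The plan is to combine the transformation formula of Lemma \ref{lem:pw0} with the characterisation of constant flag curvature in Theorem \ref{thm:cfc}, so that the substantive equivalence reduces to almost no computation. I would let $S$ denote the geodesic spray of the metric assumed to have constant curvature—say $F$—and let $\widetilde{S}=S-2P{\mathcal C}$ be the geodesic spray of $\widetilde{F}$, with $P$ the projective factor. By Theorem \ref{thm:cfc}, the hypothesis that $F$ has constant curvature is equivalent to $W_0=0$. Substituting this into \eqref{pw0} gives $\widetilde{W_0}=-\tfrac{3}{2}\,\delta_S P\otimes{\mathcal C}$. Applying Theorem \ref{thm:cfc} a second time to $\widetilde{F}$, the metric $\widetilde{F}$ has constant curvature if and only if $\widetilde{W_0}=0$, and since ${\mathcal C}$ is nowhere zero on $T_0M$ this holds precisely when $\delta_S P=0$, which is condition i). This settles the equivalence between constant curvature of the second metric and the first Hamel condition.

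It then remains to prove that the two Hamel conditions i) and ii) are equivalent; this part is purely a statement about the positively $1$-homogeneous function $P$ and the spray $S$, and uses no curvature hypothesis. Starting from the second and third expressions in \eqref{elform} I would write $\delta_S P=d_J(SP)-2\,d_hP$ and apply $d_J$. Using $d_J^2=0$ together with the commutation $[d_h,d_J]=0$ (so that $d_Jd_h=-d_hd_J$), this yields
\begin{eqnarray*}
d_J\delta_S P = 2\,d_hd_JP.
\end{eqnarray*}
Consequently $\delta_S P=0$ immediately forces $d_hd_JP=0$, giving i)$\,\Rightarrow\,$ii). Conversely, ii) only yields that the semi-basic $1$-form $\delta_S P$ is $d_J$-closed, $d_J\delta_S P=0$, which a priori is weaker than vanishing; recovering i) from this is the main obstacle.

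To overcome it I would exploit homogeneity. Writing the semi-basic $1$-form as $\delta_S P=(\delta_S P)_i\,dx^i$, the relation $d_J\delta_S P=0$ is exactly the symmetry $\partial(\delta_S P)_i/\partial y^j=\partial(\delta_S P)_j/\partial y^i$. Independently, a short computation using that $P$ is $1$-homogeneous shows $i_S\delta_S P=(\delta_S P)_iy^i=0$ for every spray $S$. Differentiating this last identity with respect to $y^j$, substituting the symmetry just obtained, and invoking Euler's theorem (each $(\delta_S P)_i$ is $1$-homogeneous, hence $y^i\,\partial(\delta_S P)_j/\partial y^i=(\delta_S P)_j$) forces $2(\delta_S P)_j=0$. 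Thus $d_hd_JP=0$ implies $\delta_S P=0$, completing ii)$\,\Rightarrow\,$i) and with it the equivalence of the two Hamel conditions. Assembling the three steps proves the theorem.
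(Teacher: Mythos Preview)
Your argument is correct and, for the main equivalence and for i)\,$\Rightarrow$\,ii), it matches the paper's proof almost verbatim: combine Theorem~\ref{thm:cfc} with formula~\eqref{pw0}, then apply $d_J$ to $\delta_SP=d_J(SP)-2d_hP$.

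The one genuine difference is in ii)\,$\Rightarrow$\,i). The paper works entirely within the Fr\"olicher--Nijenhuis formalism: it applies the commutation identity $i_Sd_h=-d_hi_S+{\mathcal L}_{hS}+i_{[h,S]}$ to $d_JP$ and, using $i_Sd_JP={\mathcal C}(P)=P$, $hS=S$ and $J\circ[h,S]=-v$, obtains directly $i_Sd_hd_JP=\delta_SP$, so that $d_hd_JP=0$ forces $\delta_SP=0$ in one line. Your route is instead a local-coordinate homogeneity argument: from $d_J\delta_SP=0$ you extract the symmetry of $\partial(\delta_SP)_i/\partial y^j$, combine it with $i_S\delta_SP=0$ and Euler's theorem for the $1$-homogeneous components $(\delta_SP)_j$, and conclude $2(\delta_SP)_j=0$. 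Both are short and valid; the paper's version stays coordinate-free and consistent with the derivation machinery used elsewhere in the article, whereas yours is more elementary and avoids invoking the $i_Sd_h$ commutation formula.
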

\begin{proof}
For two projectively related Finsler metrics $F$ and
$\widetilde{F}$, we consider the corresponding Weyl-type curvature
tensors $W_0$ and $\widetilde{W}_0$, which are related by formula
\eqref{pw0}. Using Theorem \ref{thm:cfc} it follows that one of the
two Finsler metrics has constant curvature if and only if the
corresponding Weyl-type curvature tensor vanishes. Now, using formula
\eqref{pw0} the other Weyl-type curvature tensor vanishes, and hence
the other Finsler metric has constant curvature, if and only
if $\delta_SP=0$.

It remains to prove the equivalence of the two conditions
$\delta_SP=0$ and $d_hd_JP=0$. From formula \eqref{elform} we have
  $\delta_SP=d_J{\mathcal L}_SP-2d_hP$ and hence
  $d_J\delta_SP=-2d_Jd_hP=2d_hd_JP$. Therefore $\delta_SP=0$ implies
  $d_hd_JP=0$.

For the converse we assume $d_hd_JP=0$. Using the commutation formula
for the two derivations $i_S$ and $d_h$, see \cite[Appendix A]{GM00},
we have 
\begin{eqnarray*}
i_Sd_hd_JP & = & -d_hi_Sd_JP+ {\mathcal L}_{hS}d_JP + i_{[h,S]}d_JP \\
& = & -d_hP+ {\mathcal L}_{S}d_JP - d_vP = \delta_SP. 
\end{eqnarray*}
In the above formula we did use that $P$ is a $1$-homogeneous function
and hence $i_Sd_JP=d_{JS}P={\mathcal C}(P)=P$, $hS=S$ and $J\circ[h,S]=-v$.
\end{proof}

A function $P$, positively $1$-homogeneous in the fiber coordinates
that satisfies one of the two equivalent conditions i) or ii) of
Theorem \ref{thm:FBT} is called a \emph{Hamel function}. 

If one of the Finsler metrics in Theorem \ref{thm:FBT}  is the
Euclidean one then we obtain the following characterisation for
projectively flat Finsler metrics of constant flag curvature.
\begin{cor} \label{cor:pf}
A projectively flat Finsler metric $F$ has
constant curvature if and only if the projective factor $P$ satisfies either one of the
following equivalent Hamel conditions:
\begin{itemize}
\item[i)] $\delta_{S_0}P=0$,
\item[ii)] $d_{h_0}d_JP=0$,
\end{itemize}
where $S_0$ is the flat spray and $h_0$ the corresponding horizontal projector.
\end{cor}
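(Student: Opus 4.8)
The plan is to obtain this corollary as a direct specialisation of Theorem \ref{thm:FBT}, taking the Euclidean metric as one of the two projectively related Finsler metrics. By definition, a projectively flat Finsler metric $F$ is one whose geodesic spray is projectively related to the flat spray $S_0$; the latter is precisely the geodesic spray of the Euclidean metric, a Riemannian (hence Finsler) metric whose flag curvature is constant and equal to zero. Thus the hypothesis ``one of the Finsler metrics has constant curvature'' in Theorem \ref{thm:FBT} is automatically met by the Euclidean one.

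First I would set $\widetilde{F}$ in Theorem \ref{thm:FBT} to be the Euclidean metric, so that its geodesic spray is $S_0$ and its horizontal projector is $h_0$. Writing the geodesic spray $S$ of $F$ in the projective form $S=S_0-2P\mathcal{C}$, the function $P$ is exactly the projective factor named in the statement. Since the Euclidean metric has constant (zero) curvature, Theorem \ref{thm:FBT} applies verbatim and yields that $F$ has constant curvature if and only if $P$ is a Hamel function; its two equivalent conditions $\delta_{S_0}P=0$ and $d_{h_0}d_JP=0$ are precisely conditions i) and ii) of the corollary.

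The only point requiring care is the matching of roles, namely that the reference spray in the Hamel conditions is $S_0$ and not the geodesic spray of $F$. This is dictated by Lemma \ref{lem:pw0}, where the term $\delta_SP$ is formed from the first of the two projectively related sprays $S$ and $\widetilde{S}=S-2P\mathcal{C}$; choosing $S=S_0$ and $\widetilde{S}$ the geodesic spray of $F$ produces exactly $\delta_{S_0}P$ and, through the commutation argument already carried out in the proof of Theorem \ref{thm:FBT}, its equivalent form $d_{h_0}d_JP$. I expect no genuine obstacle here: the argument is purely an instantiation of the preceding theorem, and the sole (essentially trivial) verification is that the flat spray $S_0$ qualifies as a constant-curvature reference, which holds since $\Phi=0$ and hence $W_0=0$ for $S_0$.
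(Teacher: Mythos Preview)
Your proposal is correct and follows exactly the paper's approach: the corollary is stated there simply as the specialisation of Theorem \ref{thm:FBT} with one of the two metrics being the Euclidean (flat) one. One small labelling slip to fix: in Theorem \ref{thm:FBT} the Hamel conditions $\delta_SP=0$ and $d_hd_JP=0$ refer to the spray $S$ and projector $h$ of the metric called $F$ there, so to obtain $\delta_{S_0}P=0$ and $d_{h_0}d_JP=0$ you should take $F$ (not $\widetilde{F}$) to be the Euclidean metric---which is exactly the assignment you make in your second paragraph.
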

Corollary \ref{cor:pf} corresponds to \cite[Theorem 1.2]{LS18}.

According to Proposition \ref{prop:drdj}, in Riemannian geometry, the
Hamel condition \eqref{drdj3} means that for the linear projective
factor $P$, the basic $1$-form $d_JP$ is closed. We can use these
aspects and Theorem \ref{thm:FBT} to provide a Finslerian proof of
classical Beltrami Theorem.

\begin{thm} \label{thm:RBT} \emph{(Beltrami Theorem)}
Consider two projectively related
Finsler metrics $F$ and $\widetilde{F}$, each of them being reducible to a Riemannian
metric. Then, $F$ has constant curvature if and only if
$\widetilde{F}$ has constant curvature.
\end{thm}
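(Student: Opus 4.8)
The plan is to reduce the whole statement to the two results already in hand: the Finslerian Beltrami Theorem \ref{thm:FBT} and formula \eqref{drdj3} of Proposition \ref{prop:drdj}. By Theorem \ref{thm:FBT}, as soon as one of the two projectively related metrics has constant curvature, the other has constant curvature if and only if the projective factor $P$ is a Hamel function, i.e. $d_hd_JP=0$. Thus the entire content of the theorem collapses to the following claim: for two projectively related metrics that both reduce to Riemannian metrics, the projective factor is automatically a Hamel function whenever one of the metrics has constant curvature. This is exactly what \eqref{drdj3} provides.

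First I would record the structural feature that makes the Riemannian case rigid. Since both $F$ and $\widetilde{F}$ reduce to Riemannian metrics, their geodesic sprays are affine, so the projective factor is linear in the fibre coordinates, $P(x,y)=a_i(x)y^i$. Consequently $d_JP=a_i(x)\,dx^i$ is a basic $1$-form, and for a basic $1$-form the horizontal derivative coincides with the ordinary exterior derivative on $M$: the connection coefficients drop out because $a_i$ depends only on $x$. Hence $d_hd_JP=a_{ij}\,dx^i\wedge dx^j$ with $a_{ij}=\tfrac12(\partial a_j/\partial x^i-\partial a_i/\partial x^j)$, and the Hamel condition $d_hd_JP=0$ is simply the closedness of the $1$-form $a=a_i\,dx^i$. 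The key point for later is that this expression does not depend on which spray supplies the horizontal structure.

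For the forward implication I would assume $F$ has constant curvature; then \eqref{drdj3} applies verbatim and yields $d_hd_JP=0$, so $P$ is a Hamel function, and Theorem \ref{thm:FBT} delivers constant curvature for $\widetilde{F}$. For the converse I would exploit the symmetry of the projective relation: writing $S=\widetilde{S}-2(-P)\mathcal{C}$ exhibits $-P=(-a_i)y^i$ as the (again linear) projective factor carrying $\widetilde{S}$ to $S$. Assuming now that $\widetilde{F}$ has constant curvature and applying \eqref{drdj3} with the roles of $F$ and $\widetilde{F}$ interchanged gives $d_{\tilde h}d_J(-P)=0$; since $d_J(-P)$ is basic, this equals $-d_hd_JP$ by the connection-independence noted above, so $d_hd_JP=0$ and a second appeal to Theorem \ref{thm:FBT} forces $F$ to have constant curvature.

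The step I expect to require the most care is precisely this converse, namely checking that the Hamel condition is genuinely symmetric under interchanging the two metrics: \eqref{drdj3} is stated for a fixed direction of the deformation, so one must verify that reapplying it after the swap produces the same condition. The linearity of $P$ makes this transparent, since the condition reduces to the closedness of $a$, which is insensitive both to the sign of $a$ and to the choice of reference spray; everything else is a direct citation of Theorem \ref{thm:FBT} and Proposition \ref{prop:drdj}.
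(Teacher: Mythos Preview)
Your proposal is correct and follows the same route as the paper: invoke Theorem \ref{thm:FBT} to reduce the question to the Hamel condition $d_hd_JP=0$, and then obtain that condition from the linearity of $P$ together with formula \eqref{drdj3} of Proposition \ref{prop:drdj}. Your extra care in the converse direction---swapping the roles of $F$ and $\widetilde{F}$ and checking that $d_{\tilde h}d_J(-P)=0$ is the same condition as $d_hd_JP=0$ because $d_JP$ is basic---is a legitimate elaboration that the paper leaves implicit, but it does not constitute a different approach.
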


\begin{proof}
The only think we have to prove is that the projective factor $P$
satisfies the condition $d_hd_JP=0$ of Theorem \ref{thm:FBT}. It
follows from the fact that $P$ is linear in the fiber coordinates and
satisfies formula \eqref{drdj3} of Proposition \ref{prop:drdj}.
\end{proof}

\section{Examples}

In this section we use some examples to test the Hamel conditions i) and
ii) of Theorem \ref{thm:FBT} for the projective factor of two
projectively related Finsler metrics.

\subsection{Projectively flat Finsler metric of non-constant curvature} 

We provide now an example of a projectively flat Finsler metric, which does not have
constant curvature. In view of Theorem \ref{thm:FBT} this can be
explained by the fact that the projective factor does not satisfy the
Hamel equation $\delta_{S_0}P=0$.  We consider the following Numata
metric, \cite[\S 3.9]{BCS00}, on the unit Euclidean ball $B^n(1)$:
\begin{eqnarray*}
F(x,y)=|y|+\langle x,y\rangle.
\end{eqnarray*}
The metric written above is a projectively flat Finsler metric, whose geodesic spray is given by:
\begin{eqnarray*}
S=S_0-\frac{|y|^2y^i}{|y|+\langle x , y \rangle}\frac{\partial }{\partial y^i},
\end{eqnarray*}
where $S_0$ is the flat spray, the geodesic spray of the Euclidean
metric.

The projective factor is given by 
\begin{eqnarray*}
P=\frac{S_0(F)}{2F}= \frac{1}{2}\frac{|y|^2}{|y|+\langle x , y \rangle}.
\end{eqnarray*}
By a direct computation we have 
\begin{eqnarray*}
\delta_{S_0}P=	\left\{\frac{|y|^2}{2F^3}\frac{\partial}{\partial x^i}\left(|x|^2|y|^2
				-\langle x, y\rangle^2\right)\right\}dx^i\neq 0.
\end{eqnarray*}
Since $S_0$ is the flat spray and the projective factor does not
satisfy the Hamel equation $\delta_{S_0}P=0$, it follows by Theorem
\ref{thm:FBT} that the Numata metric does not have constant
curvature. This can be checked independently, since its scalar flag
curvature is given by 
\begin{eqnarray*}
\kappa(x,y)=\frac{3}{4}\frac{|y|^4}{\left({|y|+\langle x , y \rangle}\right)^4}.
\end{eqnarray*} 

\subsection{Projectively flat Finsler metric of constant
  curvature}

In this example, we consider a projectively flat Finsler metric $F$ with
the projective factor satisfying the Hamel equation $\delta_{S_0}P=0$. Using the Finslerian version of Beltrami Theorem
\ref{thm:FBT} it follows that the Finsler metric $F$ has constant
curvature. 

The Funk function on the Euclidean ball $B^n(1/2)$, \cite[\S 2.3]{Shen01}, 
\begin{eqnarray*}
F(x,y)=\frac{\sqrt{(1-4|x|^2)|y|^2+4\langle x, y \rangle^2}}{1-4|x|^2}+2\frac{\langle x, y \rangle}{1-4|x|^2}
\end{eqnarray*}
is a Finsler metric, projectively related  to the Euclidean metric. Its geodesic spray
is given by
\begin{eqnarray*}
S=S_0-2P\mathcal{C},
\end{eqnarray*}
where $S_0$ is the flat spray. The projective factor $P$ is given by 
\begin{eqnarray*}
P=\frac{S_0(F)}{2F}=F. \end{eqnarray*}
Recall that $F$ is projectively flat, therefore it satisfy the
Hamel equation $\delta_{S_0}F=0$. It follows that the
projective factor $P=F$ satisfies also the equation $\delta_{S_0}P=0$
and due to the Finslerian version of Beltrami Theorem
\ref{thm:FBT} it follows that the Finsler metric $F$ has constant
curvature.  One can independently check that the Finsler metric $F$
has constant flag curvature $\kappa=-1$.

\subsection{Projectively related Finsler metrics with linear
  projective factor}

In this example we consider two projectively related Finsler metric
with a linear projective factor $P$. If the first Finsler metric has
constant curvature, then the obstruction for the second Finsler
metric to be of constant curvature reduces to the fact that the
basic $1$-form $d_JP$ is closed. 

It is known that the Funk metric $F$, on the Euclidean unit ball $B^n(1)$, 
\begin{eqnarray*}
F(x,y)=\frac{\sqrt{|y|^2-(|x|^2|y|^2-\langle x, y\rangle^2)}}{1-|x|^2}+\frac{\langle x, y \rangle}{1-|x|^2},
\end{eqnarray*}
is a projectively flat Finsler metric of constant flag curvature $\kappa=-\frac{1}{4}$ with the geodesic spray:
\begin{eqnarray*}\label{spray-mfunk}
S=y^i\frac{\partial}{\partial x^i}-F(x,y)y^i\frac{\partial}{\partial y^i}.
\end{eqnarray*}
From \cite[Example 5.3]{Shen02}, we know that for any constant vector $a\in {\mathbb R}^n$, $|a|<1$, the function
\begin{eqnarray*}
\widetilde{F}(x,y)=\frac{\sqrt{|y|^2-(|x|^2|y|^2-\langle x,
    y\rangle^2)}}{1-|x|^2}+\frac{\langle x, y
  \rangle}{1-|x|^2}+\frac{\langle a,y\rangle}{1+\langle a,x\rangle}
\end{eqnarray*}
is a projective Finsler metric on $B^n(1)$.

The two Finsler metrics $F$ and $\widetilde{F}$ are projectively
related with the projective factor linear in the fiber coordinates:
\begin{eqnarray*}
P=\frac{S(\widetilde{F})}{2\widetilde{F}}=-\frac{1}{2}\frac{\langle
  a,y\rangle}{1+\langle a,x \rangle}. 
\end{eqnarray*} 
It follows that the basic $1$-form $d_JP$ is exact, 
\begin{eqnarray*}
d_JP=-\frac{1}{2}\frac{a_idx^i}{1+\langle a,x \rangle} = d\left(\ln(1+\langle a,x \rangle)^{-1/2}\right). 
\end{eqnarray*} 
Therefore, the obstruction ii) for the projective factor $P$ in Theorem
\ref{thm:FBT} is satisfied and hence the Finsler metric
$\widetilde{F}$ has constant curvature as well. This can be checked
directly, the flag curvature of $\widetilde{F}$ is
$\widetilde{\kappa}=-1$. 

\subsection*{Acknowledgments} We express our thanks to Vladimir
Matveev, Zhongmin Shen and J\'ozsef Szilasi for their comments and
suggestions on this work.

\end{document}